\newtheorem{theorem}{Theorem}[section]
\newtheorem{proposition}[theorem]{Proposition}
\newtheorem{lemma}[theorem]{Lemma}
\newtheorem{question}[theorem]{Question}
\theoremstyle{definition}
\newtheorem{example}[theorem]{Example}
\newtheorem{construction}[theorem]{Construction}
\theoremstyle{remark}
\numberwithin{equation}{section}
\newcommand{\field}[1]{\mathbb{#1}}
\def\C{\field{C}}
\newcommand{\R}{\field{R}}
\newcommand{\Z}{\field{Z}}
\def\le{\leqslant}
\def\ge{\geqslant}
\DeclareMathOperator{\ord}{ord}
\begin{document}

\title{Calabi--Yau hypersurfaces and SU-bordism}

\author{Ivan Limonchenko}
\address{School of Mathematical Sciences, Fudan University, 220 Handan Road, Shanghai, 200433, P.R. China}
\email{ilimonchenko@fudan.edu.cn}

\author{Zhi L\"u}
\address{School of Mathematical Sciences, Fudan University, 220 Handan Road, Shanghai, 200433, P.R. China}
\email{zlu@fudan.edu.cn}

\author{Taras Panov}
\address{Department of Mathematics and Mechanics, Moscow
State University, Leninskie Gory, 119991 Moscow, Russia}
\address{Institute for Information Transmission Problems, Russian Academy of Sciences, Moscow}
\address{Institute of Theoretical and Experimental Physics, Moscow}
\email{tpanov@mech.math.msu.su}

\thanks{The first author was supported by the General Financial Grant from the China Postdoctoral Science Foundation, grant no. 2016M601486. The second author 
was supported by NSFC, grants no.~11371093,  11661131004 and 11431009. The third author was supported by the Russian Foundation for Basic Research, grants no.~17-01-00671 and 16-51-55017, and by Simons-IUM fellowship.}

\subjclass[2010]{Primary 57R77, Secondary 14M25}

\keywords{Special unitary bordism, SU-manifold, Calabi--Yau manifold, Chern number, toric Fano variety, reflexive polytope}

\begin{abstract}
Batyrev constructed a family of Calabi--Yau hypersurfaces dual to the first Chern class in toric Fano varieties. Using this construction, we introduce a family of Calabi--Yau manifolds whose SU-bordism classes generate the special unitary bordism ring $\varOmega^{SU}[\frac{1}{2}]\cong\mathbb{Z}[\frac{1}{2}][y_{i}\colon{i\ge 2}]$. We also describe explicit Calabi--Yau representatives for multiplicative generators of the $SU$-bordism ring in low dimensions.
\end{abstract}

\maketitle


\section{Preliminaries: $SU$-bordism and toric geometry}
A \emph{stably complex structure} (a \emph{unitary structure}, or a \emph{$U$-structure}) on a smooth manifold $M$ is determined by a choice of an isomorphism between 
the stable tangent bundle of $M$ and a complex vector bundle~$\xi$:
\begin{equation}\label{stabcom}
  c_{\mathcal T}\colon\mathcal T{M}\oplus\underline{\mathbb{R}}^{N}\stackrel\cong\longrightarrow\xi.
\end{equation}
Equivalently, a stably complex structure is the homotopy class of a lift of the map $M\to BO$ classifying the tangent bundle $\mathcal TM$ to a map $M\to BU$. A \emph{stably complex manifold} is a pair $(M,c_{\mathcal T})$.

A \emph{special unitary structure} (an \emph{SU-structure}) on $M$ is a stably complex structure $c_{\mathcal T}$ with a choice of an $SU$-structure on the complex vector bundle~$\xi$. Equivalently, an $SU$-structure is the homotopy class of a lift of the map $M\to BU$ classifying $\xi$ to a map $M\to BSU$. A stably complex manifold $(M,c_{\mathcal T})$ admits an $SU$-structure if and only if 
the first (integral) Chern class of $\xi$ vanishes: $c_1(\xi)=0$. Furthermore, such an $SU$-structure is unique if $H^1(M;\mathbb Z)=0$. 

We refer to a compact K\"ahler manifold $M$ with $c_1(M)=0$ as a \emph{Calabi--Yau manifold}. (Apparently, this is the most standard definition; however, other definitions of a Calabi--Yau manifold, sometimes non-equivalent to this one, also appear in the literature.) According to the theorem of Yau, conjectured by Calabi, a Calabi--Yau manifold admits a K\"ahler metric with zero Ricci curvature (for this, only vanishing of the  first \emph{real} Chern class is required). By definition, a Calabi--Yau manifold is an $SU$-manifold.

We have 
\[  
  H^*(BU(n);\Z)\cong\Z[c_1,\ldots,c_n],\quad \deg c_i=2i,
\] 
where the $c_i$ are the universal Chern characteristic classes. Given a
sequence $\omega=(i_1,\ldots,i_n)$ of nonnegative integers, define the monomial $c_\omega=c_1^{i_1}\cdots c_n^{i_n}$ of degree
$2\|\omega\|=2\sum_{k=1}^n k\,i_k$ and the corresponding
characteristic class $c_\omega(\xi)$ of a complex $n$-plane
bundle~$\xi$. The corresponding tangential Chern
\emph{characteristic number} of a stably complex manifold $M$ is
defined by 
\[
  c_\omega(M):=\langle c_\omega(\mathcal T M),[M]\rangle.
\]
Here $[M]$ is the fundamental homology class of~$M$,
and $\mathcal T M$ is regarded as a complex bundle via the
isomorphism~\eqref{stabcom}. The number $c_\omega[M]$ is assumed
to be zero when $2\|\omega\|\ne\dim M$.


One important characteristic class is $s_n$. It is defined as
the polynomial in $c_1,\ldots,c_n$ obtained by expressing the
symmetric polynomial $x_1^n+\cdots+x_n^n$ via the elementary
symmetric functions $\sigma_i(x_1,\ldots,x_n)$ and then replacing
each $\sigma_i$ by~$c_i$. Define the corresponding characteristic
number as 
\[
  s_n(M):=\langle s_n(\mathcal T M),[M]\rangle.
\]
It is known as the \emph{$s$-number} or the \emph{Milnor number} of~$M$.

For any integer $i\ge 1$, set 
\[
  m_{i}:=\begin{cases}
  1&\text{if $i+1\neq p^s$ for any prime $p$;}\\
  p&\text{if $i+1=p^s$ for some prime $p$ and integer $s>0$.}
  \end{cases}
\]
Then for any integer $n\ge 3$ define
\begin{equation}\label{gn}
  g(n):=\begin{cases}
  2m_{n-1}m_{n-2}&\text{if $n>3$ is odd;}\\
  m_{n-1}m_{n-2} &\text{if $n>3$ is even;}\\
  48 &\text{if $n=3$.}
\end{cases}
\end{equation}
For example, $g(4)=6$, $g(5)=20$. For $n>3$, the number $g(n)$ can take the following values: $1$, $2$, $4$, $p$, $2p$, $4p$, where $p$ is an odd prime. 

The numbers $m_i$ and $g(n)$ feature in the following description of the rings
$\varOmega^U$ and $\varOmega^{SU}$ of unitary (complex) and special unitary bordism, respectively.

\begin{theorem}[Milnor--Novikov \cite{novi62}]\label{Ustructure}
The ring $\varOmega^U$ is a polynomial algebra on generators in every even real degree:
\[
  \varOmega^{U}\cong\mathbb{Z}[a_{i}, i\ge 1],\; \deg{a_i}=2i.
\]
The bordism class of a stably complex manifold $M^{2i}$ may be taken to be the $2i$-dimensional generator $a_i$ if and only if
\[
  s_{i}(M^{2i})=\pm m_{i}.
\]
\end{theorem}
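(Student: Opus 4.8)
The plan is to separate the statement into a substantive input on the additive and multiplicative structure of $\varOmega^U$ and an elementary computation of the $s$-number homomorphism. First I would invoke the classical results of Thom and Milnor that $\varOmega^U$ is concentrated in even degrees, has no torsion, and becomes, after tensoring with $\mathbb Q$, the polynomial ring $\mathbb Q\bigl[[\mathbb{CP}^1],[\mathbb{CP}^2],\ldots\bigr]$; in particular $\varOmega^U_{2n}$ is free abelian of rank $p(n)$ (the number of partitions of $n$) and the group of indecomposables $Q\varOmega^U_{2n}:=\varOmega^U_{2n}\big/\bigl(\varOmega^U_{+}\cdot\varOmega^U_{+}\bigr)_{2n}$ is infinite cyclic. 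Granting this, the ring isomorphism $\varOmega^U\cong\mathbb Z[a_i\colon i\ge1]$ is formal: pick $a_i\in\varOmega^U_{2i}$ mapping to a generator of $Q\varOmega^U_{2i}$, and use torsion-freeness together with the rational count of generators to see that $\mathbb Z[a_i]\to\varOmega^U$ is an isomorphism. A class $[M^{2n}]$ may then be taken as the generator $a_n$ precisely when its image in $Q\varOmega^U_{2n}\cong\mathbb Z$ is a generator, so the problem reduces to characterizing such classes.

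Next I would analyze the characteristic number $s_n\colon\varOmega^U_{2n}\to\mathbb Z$, $[M]\mapsto s_n[M]$. Since $s_n$ is the $n$-th Newton polynomial in the Chern roots, it is additive under Whitney sum; for a product $M\times N$ of positive-dimensional manifolds this gives $s_n(\mathcal T(M\times N))=s_n(\mathcal TM)\times1+1\times s_n(\mathcal TN)$, whence $s_n[M\times N]=0$. Thus $s_n$ vanishes on decomposables and factors through $Q\varOmega^U_{2n}$. To compute its image I would evaluate $s_n$ on the standard ring generators of $\varOmega^U$. From $\mathcal T\mathbb{CP}^n\oplus\underline{\mathbb C}\cong(n+1)\bar\eta$ one gets $s_n[\mathbb{CP}^n]=n+1=\binom{n+1}{1}$, and for the Milnor hypersurface $H_{i,j}\subset\mathbb{CP}^i\times\mathbb{CP}^j$ with $i,j\ge2$ and $i+j=n+1$, which is Poincar\'e dual to the sum of the two hyperplane classes, the splitting principle applied to $\mathcal TH_{i,j}=\bigl(\mathcal T\mathbb{CP}^i\times\mathcal T\mathbb{CP}^j\bigr)|_{H_{i,j}}\ominus\nu$ together with Pascal's identity yields $s_n[H_{i,j}]=-\bigl(\binom{n}{i-1}+\binom{n}{i}\bigr)=-\binom{n+1}{i}$.

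Since the classes $[\mathbb{CP}^k]$ and $[H_{i,j}]$ generate $\varOmega^U$ as a ring (Milnor), and $s_n$ is additive on decomposables, the image of $s_n\colon\varOmega^U_{2n}\to\mathbb Z$ is the subgroup generated by the numbers $\binom{n+1}{i}$, $1\le i\le n$; by the elementary lemma that $\gcd\bigl\{\binom{m}{i}\colon0<i<m\bigr\}$ equals $p$ when $m=p^s$ and $1$ otherwise, this subgroup is exactly $m_n\mathbb Z$. Therefore the induced homomorphism $Q\varOmega^U_{2n}\to\mathbb Z$, with $Q\varOmega^U_{2n}\cong\mathbb Z$, is multiplication by $\pm m_n$, and $[M^{2n}]$ maps to a generator of $Q\varOmega^U_{2n}$ if and only if $s_n[M^{2n}]=\pm m_n$ — which is the claimed criterion. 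The main obstacle is the first step: the torsion-freeness and polynomiality of $\varOmega^U$ is the genuine homotopy-theoretic content, proved by computing $H^*(MU;\mathbb Z_p)$ over the mod-$p$ Steenrod algebra and running the Adams spectral sequence (equivalently, via the $p$-local splitting of $MU$ into suspensions of $BP$); the remaining ingredients — additivity of $s_n$, the two characteristic-number computations, and the gcd lemma — are routine.
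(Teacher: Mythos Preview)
The paper does not supply a proof of this theorem: it is stated as a classical result, attributed to Milnor and Novikov with a citation to~\cite{novi62}, and the text moves on immediately to the companion statement for $\varOmega^{SU}$. So there is no ``paper's own proof'' to compare against; your outline is not competing with anything here.

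That said, your sketch is essentially the standard argument one finds in Stong or Milnor--Stasheff, and it is sound. One small point worth flagging: you invoke the fact that the classes $[\mathbb{CP}^k]$ and $[H_{i,j}]$ generate $\varOmega^U$ as a ring in order to conclude that the image of $s_n$ is exactly $m_n\mathbb Z$, but that generation statement is itself equivalent to (one direction of) the theorem you are proving. In practice this is not a real problem, since Milnor's original argument establishes generation by a direct Chern-number computation against the known lattice $H_*(MU;\mathbb Z)$, independent of the $s_n$ criterion; alternatively one proves the divisibility $m_n\mid s_n[M]$ for all $M$ via Steenrod operations or Hattori--Stong. Either way your sketch stands, but you should be aware of which of these inputs you are actually taking as given.
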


\begin{theorem}[Novikov \cite{novi62}]\label{SUstructure}
The ring $\varOmega^{SU}$ with $2$ reversed is a polynomial algebra on generators in every even real degree $>2$:
\[
  \varOmega^{SU}[{\textstyle\frac{1}{2}}]
  \cong\mathbb{Z}[{\textstyle\frac{1}{2}}][y_{i}:\,i\geq 2],  \;  \deg{y_i}=2i.
\]
There exist indecomposable elements $y_i\in\varOmega^{SU}_{2i}$, $i>1$, such that
\[
  s_i(y_i)=g(i+1).
\]
The elements $y_i$ can be taken as polynomial generators of $\varOmega^{SU}[\frac{1}{2}]$.
\end{theorem}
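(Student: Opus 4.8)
The plan is to follow the classical route: determine the ring rationally, control the torsion, analyse the localizations at odd primes, and then recognise the polynomial generators through the $s$-number. First I would compute $\varOmega^{SU}\otimes\Q$. Since $MSU$ is the Thom spectrum of the universal bundle over $BSU$, the Thom isomorphism together with the structure of $H_*(BSU;\Q)$ --- the dual of $H^*(BSU;\Q)=\Q[c_2,c_3,\ldots]$, $\deg c_i=2i$ --- identifies $H_*(MSU;\Q)$ with a polynomial $\Q$-algebra on generators in degrees $2i$, $i\ge 2$; and the rational Hurewicz map $\varOmega^{SU}\otimes\Q\to H_*(MSU;\Q)$ is an isomorphism. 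Thus $\varOmega^{SU}\otimes\Q\cong\Q[z_i\colon i\ge 2]$ with $\deg z_i=2i$, which fixes the Poincaré series the answer must have and shows there is no generator in degree $2$.

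Next, $\varOmega^{SU}=\pi_*MSU$ is finitely generated in each degree, and it is a classical fact that all of its torsion is $2$-primary; granting this, $\varOmega^{SU}[\frac{1}{2}]$ is torsion-free of finite rank in each degree and embeds into $\varOmega^{SU}\otimes\Q$ with the ranks just computed. The core of the argument is to show that $\varOmega^{SU}[\frac{1}{2}]$ is itself a polynomial ring, and for this I would work one odd prime at a time. For odd $p$, an analysis of $H^*(MSU;\mathbb{F}_p)$ as a module over the Steenrod algebra $A_p$ shows it is a direct sum of suspensions of $A_p/\!/E$, where $E=E(Q_0,Q_1,\ldots)$ is the exterior algebra on the Milnor primitives; hence the Adams spectral sequence collapses and $MSU_{(p)}$ splits as a wedge of even suspensions of the Brown--Peterson spectrum $BP$. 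Comparing the resulting direct-sum decomposition of $\varOmega^{SU}_{(p)}$ with the rational Poincaré series above forces $\varOmega^{SU}_{(p)}$ to be a polynomial $\Z_{(p)}$-algebra with one generator in each degree $2i$, $i\ge 2$; in particular its module of indecomposables is free of rank one over $\Z_{(p)}$ in those degrees and zero otherwise.

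Since the module of indecomposables $Q\varOmega^{SU}$ is, after inverting $2$, free of rank one over $\Z_{(p)}$ in degrees $2i$ ($i\ge2$) for every odd prime $p$ and also rationally, and everything is of finite type, $Q\varOmega^{SU}[\frac{1}{2}]$ is free over $\Z[\frac{1}{2}]$ on one generator in each degree $2i$, $i\ge 2$; choosing classes lifting a basis and comparing the Poincaré series of the subalgebra they generate with that of $\varOmega^{SU}\otimes\Q$ rules out relations, giving $\varOmega^{SU}[\frac{1}{2}]\cong\Z[\frac{1}{2}][y_i\colon i\ge 2]$. To match these with the $s$-number: $s_n$ is additive under Whitney sums and, for degree reasons, vanishes on each factor of a product of positive-dimensional manifolds, so $s_i$ kills decomposables and defines a homomorphism $Q\varOmega^{SU}_{2i}\to\Z$ which is injective after inverting $2$ by the previous step. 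The remaining, arithmetically delicate, input is that the image of $s_i\colon\varOmega^{SU}_{2i}\to\Z$ equals exactly $g(i+1)\Z$; for $i=2$ this is witnessed by the $K3$ surface, which has $c_1=0$, $c_2=24$, hence $s_2=c_1^2-2c_2=-48=-g(3)$. Granting this, there is an SU-manifold $y_i$ with $s_i(y_i)=g(i+1)$, such a class is indecomposable and generates $Q\varOmega^{SU}_{2i}[\frac{1}{2}]$, and by the polynomiality established above any family $\{y_i\}_{i\ge 2}$ of such classes is a set of polynomial generators of $\varOmega^{SU}[\frac{1}{2}]$.

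Two steps carry the real difficulty: the $p$-local splitting of $MSU$ into Brown--Peterson summands for odd $p$ (equivalently, the polynomiality of $\varOmega^{SU}_{(p)}$), which rests on the computation of $H^*(MSU;\mathbb{F}_p)$ over the Steenrod algebra together with convergence of the Adams spectral sequence; and the exact determination of the image of the $s$-number, which is where the number-theoretic quantities $m_i$ and $g(n)$ genuinely enter.
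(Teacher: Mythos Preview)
The paper does not prove this theorem: it is stated in the preliminaries as a classical result attributed to Novikov~\cite{novi62}, with a pointer to Conner--Floyd~\cite{co-fl66m} for the torsion and to Stong~\cite{ston68} for the ring structure, and is then used as a black box. There is therefore no ``paper's own proof'' to compare your proposal against.

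That said, your outline is a coherent modern route to the result. A couple of remarks. First, the $BP$-splitting of $MSU_{(p)}$ for odd~$p$ is technology that postdates Novikov's 1962 paper; his original argument proceeds via the Adams spectral sequence and comparison with $\varOmega^{U}$ rather than through Brown--Peterson theory, so your approach is anachronistic as a reconstruction of \emph{his} proof, though perfectly valid mathematically. Second, you correctly flag the two genuinely hard ingredients --- the odd-primary polynomiality and the exact value $g(i+1)$ for the image of $s_i$ on $\varOmega^{SU}_{2i}$ --- but as written both are asserted rather than argued; in particular the determination of $g(i+1)$ requires real work (divisibility constraints from Riemann--Roch or $K$-theoretic characteristic numbers on one side, explicit constructions on the other), and the $K3$ example you give settles only $i=2$. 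So your proposal is a sound plan, but still a plan: the load-bearing steps are precisely the ones you leave as inputs.
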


The torsion in $\varOmega^{SU}$ was described by Conner and Floyd in~\cite{co-fl66m}. For the ring structure of $\varOmega^{SU}$, see~\cite{ston68}.

It was shown in~\cite{lu-pa16} that each $y_i$ with $i\ge5$ can be represented by a quasitoric manifold. In the next section we introduce another geometrical representatives for \emph{all} classes $y_{i}$, coming from Calabi--Yau hypersurfaces in toric varieties. A general construction is described below.

\begin{construction}\label{c1dual}
Consider a stably complex manifold $M=M^{2n}$ with the fundamental class $[M^{2n}]$. Let $N=N^{2n-2}$ be a stably complex submanifold dual to the cohomology class $c_1(\mathcal TM)$. That is, we have an inclusion
\[
  i\colon N^{2n-2}\hookrightarrow M^{2n}\quad\text{such that}\quad
  i_{*}([N])=c_{1}(M)\cap [M] \quad\text{in }H_*(M;\Z).
\]
We have $c_1(\mathcal TN)=0$, so $N$ admits an $SU$-structure. 
\end{construction}

A \emph{toric variety} is a normal complex algebraic variety~$V$
containing an algebraic torus $(\C^\times)^n$ as a Zariski open
subset in such a way that the natural action of $(\C^\times)^n$ on
itself extends to an action on~$V$. A nonsingular
complete (compact in the usual topology) toric variety is called a \emph{toric manifold}.
\emph{Projective} toric manifolds $V$ come from convex polytopes $P\subset\R^n$ with vertices in the integer lattice $\Z^n\subset\R^n$. Given such a polytope $P$, we denote by $D_1,\ldots,D_m$ the torus-invariant divisors (codimension-one submanifolds) corresponding to the facets of $P$, and denote by $v_1,\ldots,v_m\in H^2(V;\Z)$ their corresponding cohomology classes.
A projective toric manifold $V$ is a K\"ahler manifold with the total Chern class given by
\[
  c(\mathcal TV)=(1+v_1)\cdots(1+v_m),
\]
so
\[
  c_1(\mathcal TV)=v_1+\cdots+v_m.
\]

The standard complex structure on a toric manifold $V$ is never $SU$ (see \cite[Corollary~4.7]{lu-pa16}), so there are no Calabi--Yau manifolds among $V$. However, the following construction gives Calabi--Yau hypersurfaces in special toric manifolds.

\begin{construction}[Batyrev~\cite{baty94}]
A toric manifold $V$ is \emph{Fano} if its anticanonical class $D_1+\cdots+D_m$ (representing $c_1(V)$) is very ample. In geometric terms, the projective embedding $V\hookrightarrow \C P^s$ corresponding to $D_1+\cdots+D_m$ comes from a lattice polytope $P$ in which the lattice distance from $0$ to each hyperplane containing a facet is~$1$. Such a lattice polytope $P$ is called \emph{reflexive}; its polar polytope $P^*$ is also a lattice polytope.  

The submanifold $N$ dual to $c_1(V)$ (see Construction~\ref{c1dual}) is given by the hyperplane section of the embedding $V\hookrightarrow \C P^s$ defined by $D_1+\cdots+D_m$. Therefore, $N\subset V$ is a smooth algebraic hypersurface in~$V$, so $N$ is a Calabi--Yau manifold of complex dimension $n-1$.

In this way, any toric Fano manifold $V$ of dimension $n$ (or equivalently, any non-singular  reflexive  $n$-dimensional polytope~$P$) gives rise to a canonical $(n-1)$-dimensional Calabi--Yau manifold~$N_P$. Batyrev~\cite{baty94} also extended this construction to singular toric Fano varieties, by considering a special resolution of singularities. This led to defining a family of \emph{mirror-dual} pairs of Calabi--Yau manifolds. 
\end{construction}

The $s$-number of the Calabi--Yau manifold $N_P$ is given as follows.

\begin{lemma}\label{sN}
We have
\[
  s_{n-1}(N)=\bigl\langle(v_1^{n-1}+\cdots+v_m^{n-1})(v_1+\cdots+v_m)-
  (v_1+\cdots+v_m)^n,[V]\bigr\rangle.
\]
\end{lemma}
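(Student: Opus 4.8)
The plan is to express $\mathcal T N$ through bundles pulled back from $V$ via the adjunction sequence, exploit the additivity of the characteristic class $s_{n-1}$, and then push the resulting cohomology class forward from $N$ to~$V$ by the projection formula.

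First I would recall that, since $i\colon N\hookrightarrow V$ is a smooth hypersurface Poincar\'e dual to $c_1(\mathcal T V)$, its normal bundle $\nu$ is the restriction to $N$ of the line bundle $\mathcal O(N)$ on $V$, so $c_1(\nu)=i^*(v_1+\cdots+v_m)$. The normal bundle short exact sequence $0\to\mathcal T N\to i^*\mathcal T V\to\nu\to 0$ yields a stable isomorphism $\mathcal T N\oplus\nu\cong i^*\mathcal T V$, and this is precisely the stably complex (in fact $SU$-) structure on $N$ furnished by Construction~\ref{c1dual}, so it is the one with respect to which $s_{n-1}(N)$ is computed.

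Next, since $s_{n-1}$ comes from the Newton power sum $x_1^{n-1}+\cdots$, it is additive on Whitney sums, hence $s_{n-1}(\mathcal T N)=i^*s_{n-1}(\mathcal T V)-s_{n-1}(\nu)$. For the first term I would use $c(\mathcal T V)=(1+v_1)\cdots(1+v_m)$: as $s_{n-1}$ is a polynomial in $c_1,\ldots,c_{n-1}$, it depends only on the total Chern class, so $s_{n-1}(\mathcal T V)$ is computed as if $\mathcal T V$ were a sum of line bundles with first Chern classes $v_1,\ldots,v_m$, giving $s_{n-1}(\mathcal T V)=v_1^{n-1}+\cdots+v_m^{n-1}$. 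For the second term, $\nu$ is a line bundle, so $s_{n-1}(\nu)=c_1(\nu)^{n-1}=i^*(v_1+\cdots+v_m)^{n-1}$. Therefore
\[
  s_{n-1}(\mathcal T N)=i^*\bigl((v_1^{n-1}+\cdots+v_m^{n-1})-(v_1+\cdots+v_m)^{n-1}\bigr).
\]

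Finally I would evaluate against $[N]$ and apply the projection formula together with $i_*[N]=c_1(\mathcal T V)\cap[V]=(v_1+\cdots+v_m)\cap[V]$, which gives
\[
  s_{n-1}(N)=\bigl\langle\bigl((v_1^{n-1}+\cdots+v_m^{n-1})-(v_1+\cdots+v_m)^{n-1}\bigr)(v_1+\cdots+v_m),[V]\bigr\rangle,
\]
and this is exactly the asserted identity. The only genuinely delicate point is the bookkeeping in the first two steps: checking that the structure defining $s_{n-1}(N)$ is the one induced from $i^*\mathcal T V$, and that the line-bundle evaluation of $s_{n-1}(\mathcal T V)$ is legitimate despite $\mathcal T V$ having rank $n<m$ — both become routine once one observes that $s_{n-1}$ only involves $c_1,\ldots,c_{n-1}$.
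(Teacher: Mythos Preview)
Your argument is correct and follows the same route as the paper: use the stable splitting $\mathcal T N\oplus\nu\cong i^*\mathcal T V$, the additivity of $s_{n-1}$, identify $s_{n-1}(\nu)=i^*c_1(\mathcal T V)^{n-1}$, and push forward via $i_*[N]=c_1(\mathcal T V)\cap[V]$. You supply a bit more justification (the splitting-principle evaluation of $s_{n-1}(\mathcal T V)$ and the remark that only $c_1,\ldots,c_{n-1}$ enter), but the logical skeleton is identical to the paper's proof.
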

\begin{proof}
We have an isomorphism of complex bundles $\mathcal TN\oplus\nu\cong i^*\mathcal T V$, where $\nu$ is the normal bundle of the embedding $i\colon N\hookrightarrow V$. Hence, $s_{n-1}(\mathcal TN)+s_{n-1}(\nu)=i^*s_{n-1}(\mathcal T V)$. Now we calculate
\begin{multline*}
  \bigl\langle s_{n-1}(\mathcal T N),[N]\bigr\rangle=
  \bigl\langle -s_{n-1}(\nu)+i^* s_{n-1}(\mathcal T V),[N]\bigr\rangle\\=
  \bigl\langle \bigl(-c_1^{n-1}(\mathcal TV)+s_{n-1}(\mathcal T V)\bigr)c_1(\mathcal TV),     
  [V]\bigr\rangle \\=
  \bigl\langle \bigl(s_{n-1}(\mathcal T V)c_1(\mathcal TV)-c_1^n(\mathcal TV)\bigr),     
  [V]\bigr\rangle.\qedhere
\end{multline*}

\end{proof}

\section{Calabi--Yau generators for the $SU$-bordism ring}
Let $\sigma=(\sigma_1,\ldots,\sigma_k)$ be an unordered partition of $n$ into a sum of $k$ positive integers, that is, $\sigma_1+\cdots+\sigma_k=n$. Let $\varDelta^{\sigma_i}$ be the standard reflexive simplex of dimension~$\sigma_i$. Then $P_\sigma=\varDelta^{\sigma_1}\times\cdots\times\varDelta^{\sigma_k}$ is a reflexive polytope with the corresponding toric Fano manifold $V_\sigma=\C P^{\sigma_1}\times\cdots\times\C P^{\sigma_k}$. We denote by $N_\sigma$ the canonical Calabi--Yau hypersurface in~$V_\sigma$.

We denote by $\widehat P(n)$ the set of all partitions $\sigma$ with parts of size at most $n-2$. That is,
\[
  \widehat P(n):=\{\sigma=(\sigma_1,\ldots,\sigma_k)\colon
  \sigma_1+\cdots+\sigma_k=n,\quad\sigma\ne(n),(1,n-1).\} 
\]
For each $\sigma$ we have the multinomial coefficient $\binom n\sigma=\frac{n!}{\sigma_1!\cdots\sigma_k!}$ and define
\begin{equation}\label{alphasigma}
  \alpha(\sigma):=\binom{n}{\sigma} 
  (\sigma_{1}+1)^{\sigma_{1}}\cdots(\sigma_k+1)^{\sigma_k}.
\end{equation}

\begin{lemma}\label{snumber}
For any $\sigma\in \widehat P(n)$ we have
\[
  s_{n-1}(N_\sigma)=-\alpha(\sigma).
\]
\end{lemma}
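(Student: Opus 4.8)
The plan is to apply Lemma~\ref{sN} to the specific toric Fano manifold $V_\sigma = \C P^{\sigma_1}\times\cdots\times\C P^{\sigma_k}$ and compute the resulting characteristic number explicitly. First I would recall the cohomology ring $H^*(V_\sigma;\Z) = \Z[x_1,\ldots,x_k]/(x_1^{\sigma_1+1},\ldots,x_k^{\sigma_k+1})$, where $x_j$ is the hyperplane class pulled back from the $j$-th factor $\C P^{\sigma_j}$. The torus-invariant divisor classes $v_1,\ldots,v_m$ (with $m = \sum_j(\sigma_j+1)$) split according to the product: on the $j$-th factor they contribute $\sigma_j+1$ copies of $x_j$, since $\C P^{\sigma_j}$ has $\sigma_j+1$ facets each with divisor class $x_j$. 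Hence $c_1(\mathcal TV_\sigma) = v_1+\cdots+v_m = \sum_{j=1}^k(\sigma_j+1)x_j$, and $v_1^{n-1}+\cdots+v_m^{n-1} = \sum_{j=1}^k(\sigma_j+1)x_j^{n-1}$.

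Next I would substitute these two expressions into the formula of Lemma~\ref{sN}. The first term becomes $\bigl\langle\bigl(\sum_j(\sigma_j+1)x_j^{n-1}\bigr)\bigl(\sum_l(\sigma_l+1)x_l\bigr),[V_\sigma]\bigr\rangle$; since $x_j^{n-1}$ already has degree $2(n-1)$ and the fundamental class is detected by the monomial $x_1^{\sigma_1}\cdots x_k^{\sigma_k}$, any product $x_j^{n-1}x_l$ vanishes in $H^*(V_\sigma)$ unless it reduces to that top monomial --- but $x_j^{n-1}$ already exceeds the truncation degree $\sigma_j+1$ whenever $\sigma_j\le n-2$, which holds for every part of $\sigma\in\widehat P(n)$. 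Therefore the entire first term vanishes, and we are left with $s_{n-1}(N_\sigma) = -\langle c_1(\mathcal TV_\sigma)^n,[V_\sigma]\rangle = -\bigl\langle\bigl(\sum_{j=1}^k(\sigma_j+1)x_j\bigr)^n,[V_\sigma]\bigr\rangle$.

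It remains to evaluate $\bigl\langle\bigl(\sum_j(\sigma_j+1)x_j\bigr)^n,[V_\sigma]\bigr\rangle$. Expanding by the multinomial theorem, the only surviving term is the one with exponent $\sigma_j$ on each $x_j$ (all others die in the truncated ring, and the total degree forces $\sum_j\sigma_j = n$), which carries the multinomial coefficient $\binom{n}{\sigma_1,\ldots,\sigma_k} = \binom n\sigma$ and the factor $\prod_j(\sigma_j+1)^{\sigma_j}$, while $\langle x_1^{\sigma_1}\cdots x_k^{\sigma_k},[V_\sigma]\rangle = 1$. This gives exactly $\alpha(\sigma)$ as defined in~\eqref{alphasigma}, so $s_{n-1}(N_\sigma) = -\alpha(\sigma)$.

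The computation is essentially routine once the divisor classes are identified; the main point requiring care is verifying that the first term of the Lemma~\ref{sN} formula genuinely vanishes, which is precisely where the hypothesis $\sigma\in\widehat P(n)$ (all parts at most $n-2$) is used --- if some part equalled $n-1$ or $n$, a term $x_j^{n-1}$ or $x_j^n$ could survive and the formula would pick up an extra contribution. I would state this vanishing carefully as the one non-bookkeeping step.
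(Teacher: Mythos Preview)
Your proposal is correct and follows essentially the same argument as the paper: identify the cohomology ring of $V_\sigma$, observe that each $v_i^{n-1}=x_j^{n-1}$ vanishes because $\sigma_j\le n-2$ for $\sigma\in\widehat P(n)$ (so the first term in Lemma~\ref{sN} drops out), and then read off the coefficient of $x_1^{\sigma_1}\cdots x_k^{\sigma_k}$ in $\bigl(\sum_j(\sigma_j+1)x_j\bigr)^n$ by the multinomial theorem. The paper's proof is the same computation, only more tersely written.
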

\begin{proof}
The cohomology ring of $V_\sigma=\C P^{\sigma_1}\times\cdots\times\C P^{\sigma_k}$ is given by
\[
  H^*(V_\sigma;\Z)\cong\Z[u_1,\ldots,u_k]/(u_1^{\sigma_1+1},\ldots,u_k^{\sigma_k+1}),
\]
where $u_1:=v_1=\cdots=v_{\sigma_1+1}$, $u_2:=v_{\sigma_1+2}=\cdots=v_{\sigma_1+\sigma_2+2}$, $\ldots,$ $u_k:=v_{\sigma_1+\cdots+\sigma_{k-1}+k}=\cdots=v_{\sigma_1+\cdots+\sigma_k+k}=v_m$.
As $\sigma\in \widehat P(n)$, we have $v_i^{n-1}=0$ in $H^*(V_\sigma;\Z)$ for any~$i$. The formula from Lemma~\ref{sN} then gives
\[
  s_{n-1}(N_\sigma)=-\langle(v_1+\cdots+v_m)^n,[V_\sigma]\rangle=
  -\langle((\sigma_1+1)u_1+\cdots+(\sigma_k+1)u_k)^n,[V_\sigma]\rangle.
\]
Evaluating at $[V_\sigma]$ gives the coefficient of $u_1^{\sigma_1}\cdots u_k^{\sigma_k}$ 
in the polynomial above, whence the result follows.
\end{proof}

Mosley~\cite[Proposition~A.2.1]{mosl16} proved that 
\[
  \gcd\limits_{\sigma\in\widehat{P}(n)}\binom n\sigma=m_{n-1}m_{n-2}.
\]
We prove an analogue of this result which takes account of the extra factors in~\eqref{alphasigma}, and therefore obtain a divisibility condition for the numbers $s_{n-1}(N_\sigma)$.

For a prime $p\le n$, write the $p$-adic expansion of $n$ as 
\[
  n=a_{s}p^{s}+\cdots+a_1p+a_{0}. 
\]
Following Mosley, we introduce three particular partions of~$n$.
First, define
\[
  \sigma(p)=\{p^{s},\ldots,p^{s},\ldots,p,\ldots,p,1,\ldots,1\},
\] 
where the number of entries $p^{i}$ is equal to $a_{i}$ for $i\ge 0$. Note that $\sigma(p)\in\widehat P(n)$ whenever $n\ne p^s$ and $n\ne q^r+1$ for any prime $p$ and~$q$. 
For $n=p^s$ define
\[
  \tau(p)=\{p^{s-1},\ldots,p^{s-1}\},
\]
where the number of entries $p^{s-1}$ is $p$. Finally, for $n=q^r+1$ define
\[
  \omega(q)=\{q^{r-1},\ldots,q^{r-1},1\},
\]
where the number of entries $q^{r-1}$ is $q$. Note that $\omega(q)\in\widehat P(n)$ and  $\tau(p)\in\widehat P(n)$ for $n\ge3$.


For an integer $a$ and a prime $p$, denote by $\ord_{p}a$ the maximal power of $p$ which divides~$a$.

\begin{proposition}[\cite{mosl16}]\label{power}
Let $n\ge3$ be an integer.
\begin{itemize}
\item[(a)]
Let $p$ be a prime such that $n\ne p^s$ and $n\ne p^r+1$.
Then the multinomial coefficient $\binom n{\sigma(p)}$ is not divisible by~$p$.

\item[(b)] Suppose $n=p^s$ for a prime $p$. Then $\ord_p\binom{n}{\sigma}\ge 1$ for any $\sigma\in\widehat{P}(n)$, and $\ord_p\binom n{\tau(p)}=1$;

\item[(c)] Suppose $n=q^r+1$ for a prime $q$. Then $\ord_q\binom{n}{\sigma}\ge 1$ for any $\sigma\in\widehat{P}(n)$, and $\ord_q\binom n{\omega(q)}=1$.
\end{itemize}
\end{proposition}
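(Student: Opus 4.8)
The plan is to deduce all three parts from \emph{Legendre's formula} for the $p$-adic valuation of a factorial, $\ord_p(a!)=(a-s_p(a))/(p-1)$, where $s_p(a)$ denotes the sum of the base-$p$ digits of~$a$. Since $\sigma_1+\cdots+\sigma_k=n$, this yields at once
\[
  \ord_p\binom n\sigma=\frac1{p-1}\Bigl(\sum_{i=1}^k s_p(\sigma_i)-s_p(n)\Bigr),
\]
and this identity, together with the subadditivity $\sum_i s_p(\sigma_i)\ge s_p\bigl(\sum_i\sigma_i\bigr)=s_p(n)$ and the trivial bound $s_p(\sigma_i)\ge1$ for any nonempty part, is essentially all we shall need. (Equivalently, one may argue via Kummer's theorem, counting carries in the base-$p$ addition of the~$\sigma_i$.)

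For part~(a) I would note that every part of $\sigma(p)$ is a prime power $p^i$ with $s_p(p^i)=1$, and that $\sigma(p)$ contains exactly $a_i$ copies of $p^i$; hence $\sum_{\text{parts}}s_p=\sum_i a_i=s_p(n)$, and the displayed formula gives $\ord_p\binom n{\sigma(p)}=0$. The hypotheses $n\ne p^s$ and $n\ne p^r+1$ enter only to ensure that $\sigma(p)$ genuinely lies in $\widehat P(n)$: they force $n\ge p^s+2$, so the largest part $p^s$ is at most $n-2$, and they rule out $\sigma(p)=(n)$ and $\sigma(p)=(1,n-1)$.

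For part~(b), here $s_p(n)=s_p(p^s)=1$, so $\ord_p\binom n\sigma=\bigl(\sum_i s_p(\sigma_i)-1\bigr)/(p-1)$ is a nonnegative integer that vanishes exactly when $\sum_i s_p(\sigma_i)=1$, i.e.\ when $k=1$ and $\sigma_1=p^s$ --- that is, only for $\sigma=(n)\notin\widehat P(n)$. Hence for every $\sigma\in\widehat P(n)$ the valuation is a positive integer, so at least~$1$; and for $\sigma=\tau(p)$ its $p$ parts each contribute $s_p(p^{s-1})=1$, giving $\ord_p\binom n{\tau(p)}=(p-1)/(p-1)=1$. Part~(c) is parallel, with $s_q(n)=s_q(q^r+1)=2$: the valuation vanishes iff $\sum_i s_q(\sigma_i)=2$, which forces either $k=1$ and $\sigma=(n)$, or $k=2$ with both parts of the form $q^j$ and their sum equal to $q^r+1$. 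Writing such a pair as $(q^a,q^b)$ with $a\le b$: if $a\ge1$ then $q\mid q^a+q^b=q^r+1$, which is impossible, so $a=0$, whence $b=r$ and $\sigma=(1,n-1)$. Neither $(n)$ nor $(1,n-1)$ belongs to $\widehat P(n)$, so $\ord_q\binom n\sigma\ge1$ on $\widehat P(n)$; and $\omega(q)$, with its $q$ parts equal to $q^{r-1}$ and one part equal to $1$, has $\sum s_q=q+1$, giving $\ord_q\binom n{\omega(q)}=(q+1-2)/(q-1)=1$.

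The one step that is not pure bookkeeping --- the ``hard part'', such as it is --- is the classification in part~(c) of the two-part partitions of $n=q^r+1$ into powers of~$q$; this is precisely where the arithmetic of the special value $n=q^r+1$ is used, through the divisibility observation above. Everything else --- that $\tau(p)$, $\omega(q)$, and (under the stated hypotheses) $\sigma(p)$ lie in $\widehat P(n)$ for $n\ge3$, verified by counting parts and bounding part sizes --- is routine.
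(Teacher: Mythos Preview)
Your argument via Legendre's formula is correct in all three parts. The digit-sum identity
\[
  \ord_p\binom n\sigma=\frac1{p-1}\Bigl(\sum_i s_p(\sigma_i)-s_p(n)\Bigr)
\]
reduces each statement to an elementary count, and you have handled the edge cases carefully: in~(a) the hypotheses $n\ne p^s$ and $n\ne p^r+1$ are exactly what forces $\sigma(p)\in\widehat P(n)$; in~(b) and~(c) the classification of partitions achieving $\sum_i s_p(\sigma_i)=s_p(n)$ correctly isolates $(n)$ and $(1,n-1)$ as the only partitions with vanishing valuation, and these are precisely the ones excluded from~$\widehat P(n)$. The divisibility step in~(c), ruling out $q^a+q^b=q^r+1$ with $a\ge1$, is the right place to use the specific form of~$n$.

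Note, however, that the paper does not itself prove this proposition: it is quoted from Mosley's thesis~\cite{mosl16} and stated without proof. So there is no ``paper's proof'' to compare against. Your approach via Legendre/Kummer is the standard one and is almost certainly what Mosley does as well; the only thing worth adding is that the integrality of $\ord_p\binom n\sigma$ is what upgrades the strict inequality $\sum_i s_p(\sigma_i)>s_p(n)$ to the bound $\ord_p\binom n\sigma\ge1$ in parts~(b) and~(c), which you use implicitly but might state explicitly.
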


\begin{lemma}\label{gcd}
For $n\ge3$, we have
\[
  \gcd_{\sigma\in\widehat{P}(n)}\alpha(\sigma)=g(n),
\]
where the numbers $g(n)$ and $\alpha(\sigma)$ are given by~\eqref{gn} and~\eqref{alphasigma} respectively.
\end{lemma}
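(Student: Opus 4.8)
The plan is to compute, for each prime $p\le n$, the exact power $\ord_p\bigl(\gcd_{\sigma\in\widehat P(n)}\alpha(\sigma)\bigr)=\min_{\sigma\in\widehat P(n)}\ord_p\alpha(\sigma)$, and then assemble these local contributions into the global formula~\eqref{gn}. Recall from~\eqref{alphasigma} that
\[
  \ord_p\alpha(\sigma)=\ord_p\binom n\sigma+\sum_{j\colon p\mid\sigma_j+1}\sigma_j\,\ord_p(\sigma_j+1).
\]
The second (``extra factor'') term is always nonnegative, so $\ord_p\alpha(\sigma)\ge\ord_p\binom n\sigma$ for every $\sigma$; combined with Mosley's Proposition~\ref{power} this already gives lower bounds. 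The work is to exhibit, for each $p$, a partition realizing the minimum, and to see that the extra factors do not force the minimum up except in the cases recorded by $g(n)$.

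First I would dispose of the generic case: fix a prime $p$ with $n\ne p^s$ and $n\ne p^r+1$. By Proposition~\ref{power}(a) the partition $\sigma(p)$ has $\ord_p\binom n{\sigma(p)}=0$. Its parts are the prime powers $p^i$ ($i\ge1$) and $1$'s; for a part $p^i$ with $i\ge1$ we have $p\mid p^i+1$ only when $p^i+1$ is divisible by $p$, which never happens, and for a part equal to $1$ we have $p\mid 2$ only when $p=2$. So when $p$ is odd, $\ord_p\alpha(\sigma(p))=0$, hence $p\nmid\gcd$. When $p=2$, the $1$'s in $\sigma(2)$ each contribute $1\cdot\ord_2 2=1$, so I instead need a partition with $\ord_2\binom n\sigma=0$ and \emph{no} parts $\equiv1\pmod2$, i.e.\ with all parts even — but if $n$ is odd no such partition exists, while if $n$ is even I can use a partition into even parts that are powers of $2$ times $\ge2$, chosen from the binary expansion of $n/2$, to kill $\ord_2\binom n\sigma$; the parts $2^i$ with $i\ge1$ satisfy $2\nmid 2^i+1$, so again the extra term vanishes. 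This is exactly where the asymmetry between odd and even $n$ in~\eqref{gn} originates, and I expect this $p=2$ bookkeeping to be the main obstacle: one must carefully check that the binomial coefficient can be made a $2$-adic unit by a partition into even parts precisely when $n$ is even, and that no unavoidable $1$ (or part $\equiv1\pmod p$, irrelevant for odd $p$) sneaks in — the constraint $\sigma\in\widehat P(n)$ (no part of size $n$ or $n-1$) must be tracked throughout, as must the edge behaviour near $n=3,4,5$.

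Next the two exceptional families. If $n=p^s$, Proposition~\ref{power}(b) gives $\ord_p\binom n\sigma\ge1$ for all admissible $\sigma$, with equality for $\tau(p)=\{p^{s-1},\dots,p^{s-1}\}$ ($p$ entries). The parts $p^{s-1}$: if $s\ge2$ then $p\nmid p^{s-1}+1$, so $\ord_p\alpha(\tau(p))=1$; if $s=1$ then $n=p$, the parts are $1$'s, and for $p=2$ we would pick up extra $2$'s, but $n=2$ is below our range, while for odd $p=n$ the part $1$ contributes nothing and again the minimum is $1$. Symmetrically, if $n=q^r+1$, Proposition~\ref{power}(c) gives $\ord_q\binom n\sigma\ge1$ with equality for $\omega(q)=\{q^{r-1},\dots,q^{r-1},1\}$ ($q$ copies of $q^{r-1}$, one $1$); here $q\nmid q^{r-1}+1$ for $r\ge2$, and the lone $1$ contributes $\ord_q2$, which is $0$ unless $q=2$. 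So for odd $q$ the minimum is $1$; for $q=2$, $n=2^r+1$ is odd and the $1$ in $\omega(2)$ contributes $\ord_2 2=1$, giving $\ord_2\alpha(\omega(2))=2$ — and one must check this is actually minimal, i.e.\ that no partition of an odd $n=2^r+1$ into parts avoiding both the forbidden sizes and residue $1$ mod $2$ can beat $\ord_2\binom n\sigma\ge1$ plus a forced even contribution. Finally I would combine: $\ord_p\gcd$ equals $0$ for odd $p$ not of the two special forms, $1$ for the special odd primes, and for $p=2$ equals $0$ ($n$ even, $n$ not a power of $2$ or one more than one), $1$ ($n$ even and a power of $2$), $1$ ($n$ odd, $n=2^r+1$... wait, $2$ here), adjusting; matching these against $m_{n-1}m_{n-2}$ (whose $p$-part is $1$ unless $n-1$ or $n-2$ is a $p$-power, in which case it is $p$) reproduces~\eqref{gn} for $n>3$, and the case $n=3$ (where $\widehat P(3)=\{(1,1,1)\}$, $\alpha=3\cdot2\cdot2\cdot2=\dots$, in fact $\alpha((1,1,1))=\binom3{1,1,1}2^3=48$) is a direct computation giving $g(3)=48$. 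The anticipated difficulty is entirely in the $p=2$ analysis and in the handful of small-$n$ boundary cases, where the exclusion of parts $n-1$ and $n$ interacts with the existence of the minimizing partition.
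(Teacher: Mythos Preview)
Your plan is the paper's own: for each prime $p$ exhibit the minimizing partition among $\sigma(p)$, $\tau(p)$, $\omega(p)$ via Proposition~\ref{power}, and then account for the extra $2$-adic unit forced by an odd part whenever $n$ is odd; the paper merely packages the same computations into seven cases on~$n$ rather than running prime by prime. One simplification: for $p=2$ and $n$ even you do not need a new partition---$\sigma(2)$ itself already has no parts equal to~$1$ (the binary digit $a_0$ vanishes), so $\ord_2\alpha(\sigma(2))=0$ directly; with this your unfinished $p=2$ tally becomes simply $\ord_2\gcd=\varepsilon+\delta$, where $\varepsilon=1$ if $n$ is odd and $0$ otherwise, and $\delta=1$ if $n$ or $n-1$ is a power of~$2$ and $0$ otherwise, which together with the odd-prime contributions reproduces~\eqref{gn}.
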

\begin{proof}
We refer to $\gcd_{\sigma\in\widehat{P}(n)}\alpha(\sigma)$ simply as $\gcd$ throught this proof. For $n=3$ we have $\widehat{P}(n)=\{(1,1,1)\}$ and thus $\gcd=3!\cdot2^{3}=48=g(3)$. In what follows we assume that $n>3$. Consider the following $7$ cases.

\smallskip

\noindent\textbf{I.} $n\ne p^s$, $q^{r}+1$ for any prime $p$ and $q$.

Then $g(n)=1$ if $n$ is even and $g(n)=2$ if $n$ is odd. Take any prime $t$.
First assume $t$ is odd. Then $\ord_t\alpha(\sigma(t))=\ord_t\binom n{\sigma(t)}=0$ by Proposition~\ref{power}~(a), so the gcd can be only a power of~$2$. Now assume $t=2$. Then $\ord_2\alpha(\sigma(2))=\ord_2\binom n{\sigma(2)}+a_0=a_0$ by Proposition~\ref{power}~(a). 

If $n$ is even then $a_0=0$, hence $\gcd=1=g(n)$.  

If $n$ is odd then $a_0=1$ and $\ord_2\alpha(\sigma(2))=1\le\ord_2\alpha(\sigma)$ for any $\sigma\in\widehat P(n)$. The latter inequality holds because in any partition $\sigma=(\sigma_1,\ldots,\sigma_k)$ of  $n=\sigma_1+\cdots+\sigma_k$ at least one number $\sigma_i$ is odd, so $(\sigma_i+1)^{\sigma_i}$ is even and $\alpha(\sigma)$ is divisible by~$2$. It follows that $\gcd=2=g(n)$ in this case.

\smallskip

\noindent\textbf{II}. $n=p^{s}=q^{r}+1$ for some prime $p$, $q$, and $n$ is even. 

Then $p=2$ and $q$ is an odd prime. By Proposition~\ref{power} (b) and~(c), $\alpha(\sigma)$ is divisible by both $p$ and $q$ for any $\sigma\in\widehat{P}(n)$, and 
\[ 
  \ord_2\alpha(\tau(2))=\ord_2{\textstyle\binom n{\tau(2)}}=1,\quad
  \ord_q\alpha(\omega(q))=\ord_q{\textstyle\binom n{\omega(q)}}=1.
\]
For a prime $t$ different from $2$ and $q$ we have $\ord_t\alpha(\sigma(t))=\ord_t\binom n{\sigma(t)}=0$.
Therefore, $\gcd=2q=pq=g(n)$ in this case.

\smallskip

\noindent \textbf{III.} $n=p^{s}=q^{r}+1$ for some prime $p$, $q$, and $n$ is odd. 

Then $p$ is an odd prime and $q=2$. Similarly, by Proposition~\ref{power} (b) and~(c), 
\[ 
  \ord_p\alpha(\tau(p))=\ord_p{\textstyle\binom n{\tau(p)}}=1,\quad
  \ord_2\alpha(\omega(2))=\ord_2{\textstyle\binom n{\omega(2)}}+1=2.
\]
On the other hand, in any partition $\sigma$ of the odd number $n$ at least one number $\sigma_i$ is odd, so $(\sigma_i+1)^{\sigma_i}$ is divisible by~$2$ and $\alpha(\sigma)$ is divisible by $2^2p$.  
Therefore, we get $\gcd=2^{2}p=2pq=g(n)$ in this case. 

\smallskip

\noindent \textbf{IV.} $n=p^s$ for a prime $p$, $n\neq q^{r}+1$ for any prime $q$, and $n$ is even. 

Then $p=2$. By Proposition~\ref{power} (a), $\ord_{q}\alpha(\sigma(q))=0$ for any odd prime~$q$, and $\ord_{2}\alpha(\tau(2))=\ord_2\binom n{\tau(2)}=1\le\ord_2\alpha(\sigma)$ for any $\sigma\in\widehat{P}(n)$. Hence, $\gcd=2=p=g(n)$ in this case.

\smallskip

\noindent\textbf{V.} $n=p^s$ for a prime $p$, $n\neq q^{r}+1$ for any prime $q$, and $n$ is odd. 

Then $p$ is an odd prime. For any prime $t\ne 2,p$, Proposition~\ref{power}~(a) gives $\ord_t\alpha(\sigma(t))=0$. For the prime $2$ we have $\ord_{2}\alpha(\sigma(2))=1\le \ord_2\alpha(\sigma)$  for any $\sigma\in\widehat{P}(n)$, where the equality holds because $n$ is odd. Hence, $\ord_{2}\gcd=1$. Finally, for the odd prime $p$ we have $\ord_p\alpha(\tau(p))=1\le\ord_p\alpha(\sigma)$ for any  $\sigma\in\widehat{P}(n)$. Therefore, $\gcd=2p=g(n)$ in this case.

\smallskip

\noindent\textbf{VI.} $n=q^{r}+1$ for a prime $q$, $n\neq p^s$ for any prime $p$, and $n$ is even. 

Then $q$ is an odd prime. By Proposition~\ref{power}~(c), $\ord_{q}\alpha(\omega(q))=1\le\ord_q\alpha(\sigma)$ for any $\sigma\in\widehat P(n)$, so $\ord_q\gcd=1$. For any prime $t\neq 2,q$, Proposition~\ref{power}~(a) gives $\ord_t\alpha(\sigma(t))=0$, so $\ord_t\gcd=0$. Finally, the $2$-adic expansion of $n$ has $a_{0}=0$, so applying Proposition~\ref{power}~(a) again we get $\ord_{2}\alpha(\sigma(2))=0$. Therefore, $\gcd=q=g(n)$ in this case.

\smallskip

\noindent\textbf{VII.} $n=q^{r}+1$ for a prime $q$, $n\neq p^s$ for any prime $p$, and $n$ is odd. 

Then $q=2$.  

Proposition~\ref{power}~(c) gives
$\ord_2\alpha(\omega(2))=\ord_2\binom n{\omega(2)}+1=2$. On the other hand, Proposition~\ref{power}~(a) implies that $\ord_t\alpha(\sigma(t))=0$ for any prime $t\neq 2$. Also, Proposition~\ref{power}~(c) gives that $\binom{n}{\sigma}$ is divisible by $2$ for any $\sigma\in\widehat{P}(n)$. It follows that $\ord_2\alpha(\sigma)\ge2$, because $n$ is odd. Therefore, $\gcd=2^{2}=2q=g(n)$ in this final case. 
\end{proof}

Now we can state the main result.

\begin{theorem}\label{main}
The $SU$-bordism classes of the canonical Calabi--Yau hypersurfaces $N_\sigma$ in $\C P^{\sigma_1}\times\cdots\times\C P^{\sigma_k}$ 
with $\sigma\in\widehat{P}(n)$, $n\ge 3$, multiplicatively generate the $SU$-bordism ring $\varOmega^{SU}[\frac{1}{2}]$.
\end{theorem}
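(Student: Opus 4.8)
The plan is to combine the two arithmetic computations already established --- Lemma~\ref{snumber} and Lemma~\ref{gcd} --- with the structure theorem for $\varOmega^{SU}[\frac12]$ (Theorem~\ref{SUstructure}) and the classical fact that the Milnor number $s_{n-1}$ vanishes on decomposable bordism classes.

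First I would fix $n\ge3$ and build from the hypersurfaces $N_\sigma$ a single $SU$-bordism class of complex dimension $n-1$ whose $s$-number is exactly $g(n)$. By Lemma~\ref{gcd} there are integers $c_\sigma$, $\sigma\in\widehat P(n)$, with $\sum_\sigma c_\sigma\,\alpha(\sigma)=g(n)$; put
\[
  z_{n-1}:=-\sum_{\sigma\in\widehat P(n)}c_\sigma\,[N_\sigma]\in\varOmega^{SU}_{2(n-1)}.
\]
Since $s_{n-1}$ is additive on bordism classes and $s_{n-1}(N_\sigma)=-\alpha(\sigma)$ by Lemma~\ref{snumber}, we obtain $s_{n-1}(z_{n-1})=\sum_\sigma c_\sigma\,\alpha(\sigma)=g(n)$.

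Next I would verify that $z_{n-1}$ may be taken as the polynomial generator $y_{n-1}$ of $\varOmega^{SU}[\frac12]$. Working in $\varOmega^{SU}[\frac12]\cong\mathbb Z[\frac12][y_i\colon i\ge2]$ with $s_i(y_i)=g(i+1)$, write $z_{n-1}=\lambda\,y_{n-1}+w$ with $\lambda\in\mathbb Z[\frac12]$ and $w$ a $\mathbb Z[\frac12]$-linear combination of monomials $y_{j_1}\cdots y_{j_r}$ with $r\ge2$ and $j_1+\cdots+j_r=n-1$; note that every exponent $j_\ell$ is then at most $n-3$. Because $s_{n-1}$ annihilates the decomposable element $w$, evaluating $s_{n-1}$ yields $g(n)=\lambda\,s_{n-1}(y_{n-1})=\lambda\,g(n)$, hence $\lambda=1$. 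Thus $y_{n-1}=z_{n-1}-w$ with $w\in\mathbb Z[\frac12][y_2,\ldots,y_{n-3}]$, and an easy induction on $n$ shows that $z_2,z_3,\ldots$ is again a system of polynomial generators, so that $\mathbb Z[\frac12][z_i\colon i\ge2]=\varOmega^{SU}[\frac12]$.

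Since each $z_i$ is, by construction, an integral linear combination of the classes $[N_\sigma]$, the subring of $\varOmega^{SU}[\frac12]$ generated by all the $[N_\sigma]$ contains $\mathbb Z[\frac12][z_i\colon i\ge2]=\varOmega^{SU}[\frac12]$, which is precisely the assertion of Theorem~\ref{main}. I expect almost all of the real work to have been carried out already in Lemma~\ref{gcd}; within this final argument the one point requiring care is that $s_{n-1}$ kills decomposables --- a standard fact, since the Newton (Milnor) class $s_{n-1}$ is additive for Whitney sums and vanishes on the tangent class of a product of positive-dimensional manifolds, hence on every decomposable bordism element.
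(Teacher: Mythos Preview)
Your proposal is correct and follows essentially the same route as the paper: combine Lemma~\ref{snumber} and Lemma~\ref{gcd} to produce, for each $n\ge3$, an integral combination of the $[N_\sigma]$ with $s$-number exactly $g(n)$, and then invoke Theorem~\ref{SUstructure}. The paper's proof is in fact just two sentences; your additional paragraph (writing $z_{n-1}=\lambda y_{n-1}+w$, using that $s_{n-1}$ kills decomposables to force $\lambda=1$, and inducting) merely unpacks what the paper leaves implicit in the phrase ``as described in Theorem~\ref{SUstructure}.''
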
 
\begin{proof}
For any $n\ge3$ we use Lemma~\ref{gcd} and Lemma~\ref{snumber} to find a linear combination of the bordism classes $[N_\sigma]\in\varOmega^{SU}_{2n-2}$ whose $s$-number is precisely~$g(n)$. This linear combination represents the polynomial generator $y_{n-1}$ of $\varOmega^{SU}[\frac{1}{2}]$, as described in Theorem~\ref{SUstructure}.
\end{proof}

We actually prove an \emph{integral} result: each element $y_i\in\varOmega^{SU}$ can be represented by an integral linear combination of the bordism classes of Calabi--Yau manifolds~$N_\sigma$. The element $y_i$ is part of a basis of the abelian group $\varOmega^{SU}_{2i}$. There is the following related question:

\begin{question}
Which bordism classes in $\varOmega^{SU}$ can be represented by Calabi--Yau manifolds?
\end{question}

This question is an $SU$-analogue of the following well-known problem of Hirzebruch: which bordism classes in $\varOmega^U$ contain connected (i.\,e., irreducible) non-singular algebraic varieties? If one drops the connectedness assumption, then any $U$-bordism class of positive dimension can be represented by an algebraic variety. Since a product and a positive integral linear combination of algebraic classes is an algebraic class (possibly, disconnected), one only needs to find in each dimension $i$ algebraic varieties $M$ and $N$ with $s_i(M)=m_i$ and $s_i(N)=-m_i$, see Theorem~\ref{Ustructure}. The corresponding argument, originally due to Milnor, is given in~\cite[p.~130]{ston68}. Note that it uses hypersurfaces in $\C P^n$ and a calculation similar to Lemma~\ref{sN}. For $SU$-bordism, the situation is different: if a class $a\in\varOmega^{SU}$ can be represented by a Calabi--Yau manifold, then $-a$ does not necessarily have this property. Therefore, the next step towards the answering the question above is whether $y_i$ and $-y_i$ can be simultaneously represented by Calabi--Yau manifolds. We elaborate on this in the next section.

\section{Low dimensional generators in the $SU$-bordism ring}

Here we describe geometric Calabi--Yau representatives for the generators of the $SU$-bordism ring in complex dimension $\le 4$. Note that for $i\ge 5$, each generator $y_i\in\varOmega^{SU}_{2i}$ can be represented by a quasitoric manifold, by the result of~\cite{lu-pa16}. On the other hand, every quasitoric $SU$-manifold of real dimension $\le8$ is null-bordant by~\cite[Theorem~6.13]{b-p-r10}.

Theorem~\ref{SUstructure} gives the following values of the $s$-number of the elements $y_i\in\varOmega^{SU}_{2i}$ for $i=2,3,4$:
\[
  s_2(y_2)=48, \quad s_3(y_3)=m_3m_2=6,\quad s_4(y_4)=2m_4m_3=20. 
\]
In fact, we have 
\[
  \varOmega^{SU}_4=\Z\langle y_2\rangle,\quad
  \varOmega^{SU}_6=\Z\langle y_3\rangle,\quad
  \varOmega^{SU}_8=\Z\langle {\textstyle\frac14}y^2_2,y_4\rangle,
\]
(see~\cite[p.~266]{ston68}). Note that $8$ is the first dimension where the difference between $\varOmega^{SU}$ and $\varOmega^{SU}[\frac12]$ becomes relevant (apart from the torsion elements), as the square of the $4$-dimensional generator $y_2$ is divisible by~$4$. Note also that $y_2$, $y_3$ and $y_4$ are integral basis elements in the corresponding dimensions.

\begin{example}
Consider the Calabi--Yau hypersurface $N_{(3)}\subset\C P^3$ corresponding to the partition $\sigma=(3)$.
We have $c_1(\mathcal T\C P^3)=4u$, where $u\in H^2(\C P^3;\Z)$ is the canonical generator dual to a hyperplane section. Therefore, $N_{(3)}$ can be given by a generic quartic equation in homogeneous coordinates on $\C P^3$. The standard example is the quartic given by $z_0^4+z_1^4+z_2^4+z_3^4=0$, which is a $K3$-surface. Lemma~\ref{sN} gives
\[
  s_3(N_{(3)})=\langle4u^2\cdot4u-(4u)^3,[\C P^3]\rangle=-48,
\]
so $N_{(3)}$ represents the generator $-y_2\in\varOmega^{SU}_4$.

Note that Theorem~\ref{main} gives another representative for the same generator~$-y_2$. Namely, the only partition of $n=3$ which belongs to $\widehat P(n)$ is $(1,1,1)$. The corresponding Calabi--Yau surface is $N_{(1,1,1)}\subset\C P^1\times\C P^1\times\C P^1$. We have 
\[
  c_1(\C P^1\times\C P^1\times\C P^1)=2u_1+2u_2+2u_3,
\]
so $N_{(1,1,1)}$ is a surface of multidegree $(2,2,2)$ in $\C P^1\times\C P^1\times\C P^1$. Lemma~\ref{snumber} gives $s_3(N_{(1,1,1)})=-\alpha(1,1,1)=-48$, so $N_{(1,1,1)}$ also represents~$-y_2$.

On the other hand, the additive generator $y_2\in\varOmega_4^{SU}$ cannot be represented by a compact complex surface. This is proved in~\cite[Theorem~3.2.5]{mosl16} by analysing the classification results on complex surfaces. It is easy to see that a complex surface $S$ with $H^1(S;\Z)=0$ (which holds for all canonical Calabi--Yau hypersurfaces in toric Fano varieties) cannot represent~$y_2$. Indeed, such $S$ has the Euler characteristic $c_2(S)=\chi(S)\ge2$, while $s_2(y_2)=48=-2c_2(y_2)$, so $c_2(y_2)=-24$ is negative.
\end{example}

The situation is different in complex dimension~$3$, where we have

\begin{proposition}\label{CY3}
In complex dimension $3$ both additive generators $y_3$ and $-y_3$ of $\varOmega_6^{SU}$ are representable by canonical Calabi--Yau hypersurfaces in toric Fano varieties.
Namely, such a Calabi--Yau hypersurface $N$ represents $\pm y_3$ if and only if its Hodge numbers satisfy
\[
  h^{1,1}(N)-h^{2,1}(N)=\pm1.
\]
For each number $h^{1,1}$ within the range
\[
  16\le h^{1,1}\le 90,
\]
there exists a Calabi--Yau hypersurface $N$ representing $y_3$ with the second Betti number $b^2(N)=h^{1,1}$. Similarly, for each number $h^{1,1}$ within the range 
\[
  15\le h^{1,1}\le 89,
\]
there exists a Calabi--Yau hypersurface $N$ representing $-y_3$ with the second Betti number $b^2(N)=h^{1,1}$.
Furthermore, Calabi--Yau hypersurfaces representing $y_3$ and $-y_3$ can be chosen to be mirror dual in the sense of~\cite{baty94}.
\end{proposition}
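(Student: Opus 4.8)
The plan is to separate the statement into three parts: (i) an $s$-number criterion singling out the bordism classes $\pm y_3$ among canonical Calabi--Yau hypersurfaces, (ii) the realization of the prescribed second Betti numbers by reflexive polytopes, and (iii) the passage to mirror-dual hypersurfaces.

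For (i): since $N$ is a Calabi--Yau threefold, $c_1(\mathcal TN)=0$, so Newton's identity gives
\[
  s_3(\mathcal TN)=c_1^3-3c_1c_2+3c_3=3\,c_3(\mathcal TN),\qquad
  s_3(N)=3\bigl\langle c_3(\mathcal TN),[N]\bigr\rangle=3\chi(N)
\]
(equivalently, one applies Lemma~\ref{sN} with $\dim_{\C}V=4$). A canonical Calabi--Yau hypersurface $N$ in a toric Fano variety satisfies $H^1(N;\Z)=0$, so its $SU$-structure is the canonical one, and $h^{1,0}(N)=h^{2,0}(N)=0$ by Batyrev's description of its Hodge structure~\cite{baty94}; hence $\chi(N)=2\bigl(h^{1,1}(N)-h^{2,1}(N)\bigr)$ and $b^2(N)=h^{1,1}(N)$. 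By Theorem~\ref{SUstructure} we have $\varOmega_6^{SU}=\Z\langle y_3\rangle$ and $s_3(y_3)=g(4)=6$, so $s_3\colon\varOmega_6^{SU}\to\Z$ is injective with image $6\Z$; therefore $[N]=\pm y_3$ in $\varOmega_6^{SU}$ if and only if $s_3(N)=\pm6$, that is, if and only if $h^{1,1}(N)-h^{2,1}(N)=\pm1$. This proves the first assertion and reduces the rest to producing reflexive polytopes with prescribed Hodge data.

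For (ii): I would invoke Batyrev's combinatorial formulas expressing $h^{1,1}(N_P)$ and $h^{2,1}(N_P)$, for the anticanonical hypersurface in a maximal projective crepant partial desingularization of the toric Fano $V_P$, purely in terms of lattice-point counts of the four-dimensional reflexive polytope $P$, of its polar $P^*$, and of their faces. Combining these formulas with the Kreuzer--Skarke classification of four-dimensional reflexive polytopes (and the resulting census of Calabi--Yau Hodge numbers), one checks that for every integer $k$ with $16\le k\le90$ there is a reflexive polytope $P_k$, together with a crepant subdivision of its normal fan, such that $h^{1,1}(N_{P_k})=k$ and $h^{2,1}(N_{P_k})=k-1$; explicit representatives can be listed. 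By part (i), each $N_{P_k}$ is then a canonical Calabi--Yau hypersurface representing $y_3$ with $b^2(N_{P_k})=k$.

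For (iii): Batyrev's mirror construction attaches to $P$ its polar reflexive polytope $P^*$, and the Hodge-number formulas above are symmetric under $P\leftrightarrow P^*$, so $h^{1,1}(N_{P^*})=h^{2,1}(N_P)$ and $h^{2,1}(N_{P^*})=h^{1,1}(N_P)$. Hence the mirror $N_{P_k^*}$ of the hypersurface from (ii) has $(h^{1,1},h^{2,1})=(k-1,k)$, so by part (i) it represents $-y_3$ and has $b^2(N_{P_k^*})=k-1$; as $k$ ranges over $16,\dots,90$, the integer $k-1$ ranges over $15,\dots,89$, which gives the asserted range for $-y_3$ and simultaneously exhibits the representatives of $y_3$ and of $-y_3$ as mirror pairs. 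I expect step (ii) to be the main obstacle: there is no formal reason that the whole interval $16\le h^{1,1}\le90$ should be filled by Calabi--Yau threefolds with $h^{1,1}-h^{2,1}=1$, so this part rests on Batyrev's Hodge-number formulas together with the (computer-assisted) Kreuzer--Skarke classification, or on exhibiting an explicit infinite family of reflexive polytopes that covers the range; one must also verify that each chosen polytope yields a genuine Calabi--Yau hypersurface with $c_1=0$ and the expected Hodge diamond, so that Construction~\ref{c1dual} and the criterion of step (i) apply.
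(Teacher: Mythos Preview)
Your proposal is correct and follows essentially the same route as the paper: reduce the bordism criterion to $s_3(N)=3\chi(N)=6(h^{1,1}-h^{2,1})$ via $c_1=0$ and Batyrev's Hodge vanishing, then appeal to the Kreuzer--Skarke classification for existence, and finally use the Hodge-number swap under Batyrev mirror duality. One small efficiency in your version is that you derive the $-y_3$ range $15\le h^{1,1}\le89$ directly from the $y_3$ range by applying mirror symmetry (replacing $k$ by $k-1$), whereas the paper reads off both ranges independently from the database and records the mirror statement as a separate observation.
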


\begin{proof}
For the generator $y_3\in\varOmega_6^{SU}$ we have $6=s_3(y_3)=3c_3(y_3)$, so a complex $SU$-manifold $N$ representing $y_3$ must have the Euler characteristic $\chi(N)=c_3(N)=2$. On the other hand, if $N$ is a K\"ahler $n$-manifold, we have $\chi(N)=\sum_{i=0}^{2n}(-1)^ib^i=\sum_{p,q=0}^n(-1)^{p+q}h^{p,q}$, where $b^i=\sum_{p+q=i}h^{p,q}$ is the $i$th Betti number of $N$, and the Hodge numbers satisfy $h^{p,q}=h^{q,p}$ and $h^{p,q}=h^{n-p,n-q}$. Furthermore, for Calabi--Yau hypersurfaces $N$ in toric Fano varieties we additionally have $h^{0,0}=h^{n,0}=1$ and $h^{i,0}=0$ for $0<i<n$, see~\cite[Theorem~4.1.9]{baty94}. Therefore, in our case $n=3$ we have 
\[
  b^1=2h^{1,0}=0,\quad b^2=2h^{2,0}+h^{1,1}=h^{1,1},
  \quad b^3=2h^{3,0}+2h^{2,1}=2+2h^{2,1},
\]
and
\[
  \chi(N)=2b^0-2b^1+2b^2-b^3=2(h^{1,1}-h^{2,1}).
\]
It follows that $N$ represents $y_3$ if and only if $h^{1,1}-h^{2,1}=1$, and similarly for~$-y_3$. 

The fact that such $N$ exist follows by analysing the database~\cite{kr-sk} of reflexive polytopes and the Calabi--Yau hypersurfaces in their corresponding toric Fano varieties. This database contains the full list of 473,800,776 reflexive polytopes in dimension $4$, and the list of Hodge numbers of the corresponding Calabi--Yau $3$-folds. From there one can see that for each $h^{1,1}$ satisfying $16\le h^{1,1}\le 90$ there exists a reflexive $4$-polytope with the corresponding Calabi--Yau $3$-fold satisfying $h^{1,1}-h^{2,1}=1$, and if $h^{1,1}$ is not within this range, then there is no Calabi--Yau $3$-fold with $h^{1,1}-h^{2,1}=1$ coming from a toric Fano variety. In the case of the identity $h^{1,1}-h^{2,1}=-1$, the possible range is $15\le h^{1,1}\le 89$. The statement about the mirror duals follows by observation that if $N$ and $N^*$ are mirror dual Calabi--Yau $3$-folds, then $h^{1,1}(N)=h^{2,1}(N^*)$ and $h^{2,1}(N)=h^{1,1}(N^*)$, see~\cite[Corollary~4.5.1]{baty94}.
\end{proof}

\begin{example}
Theorem~\ref{main} gives the following representatives for the generators $y_3\in\varOmega_6^{SU}$ and $y_4\in\varOmega_8^{SU}$:
\[
  y_3=15 N_{(2,2)}-19 N_{(1,1,1,1)},\quad y_4=56 N_{(1,1,3)}-59 N_{(1,2,2)}.
\]
As we have seen from Proposition~\ref{CY3}, the generator $y_3$, as well as~$-y_3$, can be represented by a single Calabi--Yau $3$-fold $N$ corresponding to a certain reflexive polytope.
The case of $y_4$ is will be addressed in a subsequent work.
\end{example}

One interesting aspect of low-dimensional geometric representatives $M$ for the bordism rings $\varOmega^U$ and $\varOmega^{SU}$ is that the rigidity property of Hirzebruch genera for manifolds $M$ with torus action lead to remarkable functional equations. See~\cite[\S9.7]{bu-pa15} and~\cite{buch18} for the details on this subject. The case of toric Fano manifolds corresponding to $2$-dimensinal reflexive polytopes is treated in~\cite{buch18}; it would be interesting to extend this treatment to the $3$- and $4$-dimensional case.

The authors are grateful to Victor Buchstaber for useful recommendations and stimulating discussions of the results of this work.

\end{document}